\newcommand{\C}{\mathbb{C}}
\newcommand{\Z}{\mathbb{Z}}
\newtheorem{theorem}{Theorem}[section]
\newtheorem{corollary}[theorem]{Corollary}
\newtheorem{definition}[theorem]{Definition}
\newtheorem{example}[theorem]{Example}
\newtheorem{proposition}[theorem]{Proposition}
\begin{document}


\noindent Accepted for publication in the Proceedings of the Edinburgh Mathematical Society

\vskip 5mm

\title{Value distribution and linear operators}
\author{Rodney Halburd}
\address{Department of Mathematics, University College London,
Gower Street, London WC1E 6BT, UK} \email{r.halburd@ucl.ac.uk}
\author{Risto Korhonen}
\address{Department of Physics and Mathematics, University of Eastern Finland, P.O. Box 111,
FI-80101 Joensuu, Finland}
\email{risto.korhonen@uef.fi}
\thanks{The work reported here was supported in part by EPSRC grant number EP/I013334/1 and the Academy of Finland Grant \#112453, \#118314 and \#210245}

\begin{abstract}
Nevanlinna's second main theorem is a far-reaching generalisation of Picard's Theorem concerning the value distribution of an arbitrary meromorphic function $f$.  The theorem takes the form of an inequality containing a ramification term in which the zeros and poles of the derivative $f'$ appear.  In this paper we show that a similar result holds for special subfields of meromorphic functions where the derivative is replaced by a more general linear operator, such as higher-order differential operators and differential-difference operators.  We subsequently derive generalisations of Picard's Theorem and the defect relations.
\end{abstract}

\subjclass[2000]{Primary 30D35, Secondary 34M03, 34M05, 39A06}
\keywords{Nevanlinna, differential difference, ramification term, Picard's Theorem, defect relations, linear operator}

\maketitle

\numberwithin{equation}{section}

\section{Introduction}
Nevanlinna theory studies the value distribution of meromorphic functions.  Central to the classical theory is the second main theorem together with the related defect relations, which are powerful generalisations of Picard's Theorem.  Nevanlinna's second main theorem uses the distribution of points in the closed disc $|z|\le r$ at which a meromorphic function $f$ takes certain prescribed values to bound the Nevanlinna characteristic $T(r,f)$.  The theorem incorporates information from the distribution of zeros and poles of the derivative $f'$ through a {\em ramification term} which ensures that when we count the preimages of the prescribed values we can ignore multiplicities.

In this paper we derive an analogue of the second main theorem in which the derivative $f\mapsto f'$ is replaced by an arbitrary linear operator
$f\mapsto L(f)$ on any subfield $\mathcal{N}$ of the space of meromorphic functions such that $m(r,L(f)/f)=o(T(r,f))$ for $r$ in a large subset of 
$(0,\infty)$.  Examples of such operators are the derivative $f(z)\mapsto f'(z)$, the shift $f(z)\mapsto f(z+c)$ and the $q$-difference operator
$f(z)\mapsto f(qz)$ as well as combinations such as
$f(z)\mapsto f''(z+c_1)+f(z)-2f'(qz+c_2)$.  We derive a generalisation of Picard's Theorem, which in essence says that if there are enough  functions $a_j\in\ker(L)$ that are small compared with $f$, then $L(f)$ is identically zero.
Analogues of the defect relations are given and several examples are used to illustrate the strength of the results obtained.

The work presented here extends earlier work on the shift \cite{halburdk:06AASFM} and $q$-difference \cite{barnetthkm:07} operators.  Those works in turn grew out of a programme  to use Nevanlinna theory as a tool for detecting and describing difference equations of ``Painlev\'e type'' \cite{ablowitzhh:00,halburdk:07JPA}.
The generalisations described in the present paper are motivated by preliminary studies of differential delay equations such as
\begin{equation}
\label{quispelcs}
\alpha f(z)+\beta f'(z)=f(z)\left[ f(z+1)-f(z-1) \right],
\end{equation}
where $\alpha$ and $\beta$ are constants,
which was obtained by Quispel, Capel and Sahadevan \cite{quispelcs:92} as a reduction of the Kac-van Moerbeke equation.  Equation \eqref{quispelcs} is known to have a continuum limit to the first Painlev\'e equation.

\section{A second main theorem with general ramification-type term}\label{2ndsection}

Let $R\in(0,\infty]$, and let $\mathcal{M}_R$ be the set of all meromorphic functions in $D_R=\{z\in\C:|z|<R\}$. Let $f\in\mathcal{M}_R$ and $a\in\C$. Then there exist $m\in\Z$ and $\tilde f\in\mathcal{M}_R$ such that
    $$
    f(z)=(z-a)^m \tilde f(z),
    $$
where $\tilde f(a)\in\C\setminus\{0\}$. As in \cite{cherryy:01}, we say that $\tilde f(a)$ is the \textit{initial Laurent coefficient} of $f$ at $a$, and define
    $$
    \textrm{ilc}(f,a)=\tilde f(a).
    $$
The following theorem reduces to the second main theorem in $D_R$ by choosing $g=f'$ in the ramification type term $N_{g}(r,f)$, and by using the lemma on the logarithmic derivative.

\begin{theorem}\label{NTmainthm}
Let $R\in(0,\infty]$, and let $g\in\mathcal{M}_R\setminus\{0\}$. If $a_1,\ldots,a_q$ are $q\geq1$ different elements of $\mathcal{M}_R$ and if $f\in\mathcal{M}_R\setminus\{a_1,\ldots,a_q\}$, then
	\begin{equation}\label{mainineq}
	\begin{split}
	(q-1)T(r,f)+N_{g}(r,f)&\leq N(r,f)+\sum_{j=1}^q N\left(r,\frac{1}{f-a_j}\right)+\mathcal{R}(r,f,g),
	\end{split}
	\end{equation}	
where
	$$
	N_{g}(r,f) = 2N(r,f)-N(r,g)+N\left(r,\frac{1}{g}\right)
	$$
and
	\begin{equation*}
	\begin{split}
	\mathcal{R}(r,f,g) &= \sum_{m=1}^q T(r,a_m) +\frac{1}{2\pi}\int_0^{2\pi}
	 \log\left(\sum_{m=1}^q\left|\frac{g(re^{i\theta})}{f(re^{i\theta})-a_{m}(re^{i\theta})}\right|\,d\theta\right) \\&\quad+\frac{q-1}{2\pi}\int_0^{2\pi}\log^+
	 \frac{2}{l(re^{i\theta})}\,d\theta+ (q-1)\log 2\\ &\quad
	+\sum_{m=1}^q \log|\emph{ilc}(f-a_m,0)|-\log|\emph{ilc}(g,0)|
	\end{split}
	\end{equation*}
with
	$$
	l(re^{i\theta})=\min_{1\leq i<j\leq q}|a_i(re^{i\theta})-a_j(re^{i\theta})|.
	$$
\end{theorem}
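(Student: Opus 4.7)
The plan is to imitate the classical proof of Nevanlinna's second main theorem, with the auxiliary function $g$ playing the role of the derivative $f'$ and the pointwise ratios $|g/(f-a_m)|$ carried through the estimates; since no lemma on the logarithmic derivative is assumed, their contribution is simply absorbed into the ramification term $\mathcal{R}$.

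I first apply the ilc-version of Jensen's formula to each $f-a_j$, yielding $T(r,f-a_j)=m(r,1/(f-a_j))+N(r,1/(f-a_j))+\log|\textrm{ilc}(f-a_j,0)|$, and combine this with the two-sided bound $|T(r,f-a_j)-T(r,f)|\le T(r,a_j)+\log 2$. Summing over $j=1,\ldots,q$ reduces the target inequality to a bound of the form $\sum_j m(r,1/(f-a_j))\le(\text{terms involving } g \text{ and the integrals in }\mathcal{R})$. The combinatorial core is then a pointwise partition estimate: at each $z=re^{i\theta}$, let $k(\theta)$ minimise $|f(z)-a_m(z)|$. When $|f-a_k|\le l/(4(q-1))$, the triangle inequality gives $|f-a_m|\ge l/2$ for $m\neq k$, together with $|F(z)|\ge 1/(2|f-a_k|)$ where $F=\sum_m 1/(f-a_m)$; in the complementary case every $|f-a_m|$ is bounded below by $l/(4(q-1))$. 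Combining the two cases produces the pointwise inequality
\[
\sum_m\log^+\frac{1}{|f-a_m|}\le \log^+|F|+(q-1)\log^+\frac{2}{l}+(q-1)\log 2,
\]
and integration gives $\sum_m m(r,1/(f-a_m))\le m(r,F)+(q-1)(2\pi)^{-1}\int\log^+(2/l)\,d\theta+(q-1)\log 2$.

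To bring $g$ into the picture I write $F=\Phi/g$ with $\Phi:=\sum_m g/(f-a_m)$. Jensen's formula for $\Phi$ together with the majorisation $|\Phi|\le\sum_m|g/(f-a_m)|$ gives
\[
\log|\textrm{ilc}(\Phi,0)|+N(r,\Phi)-N(r,1/\Phi)\le \frac{1}{2\pi}\int_0^{2\pi}\log\sum_m\left|\frac{g}{f-a_m}\right|\,d\theta,
\]
while the additivity $\log|\Phi|=\log|g|+\log|F|$, via Jensen on $g$ and on $F$ separately, yields the exact identity $N(r,\Phi)-N(r,1/\Phi)=[N(r,g)-N(r,1/g)]+[N(r,F)-N(r,1/F)]$ and, up to small-function corrections absorbed in $\sum_m T(r,a_m)$, $N(r,F)-N(r,1/F)=\sum_m N(r,1/(f-a_m))-N(r,f)$. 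Substituting back, the ramification-type quantity $N_g(r,f)=2N(r,f)-N(r,g)+N(r,1/g)$ emerges naturally from the combination $-N(r,f)+N(r,1/g)-N(r,g)$ together with the extra $N(r,f)$ produced by the decomposition $qT(r,f)=(q-1)T(r,f)+m(r,f)+N(r,f)$, and the ilc-terms collected over the various applications of Jensen combine into the last line of $\mathcal R$.

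The hardest step is the pointwise estimate above: the threshold $l/(4(q-1))$ must be chosen precisely so that $|F|\ge 1/(2|f-a_k|)$ holds in the close case, which is what produces the coefficient $(q-1)$ rather than $q$ in front of $\int\log^+(2/l)\,d\theta$. The secondary technical difficulty is the careful bookkeeping of ilc-terms at $z=0$ when Jensen's formula is applied separately to $g$, $F$, and $\Phi$ and the resulting identities are combined.
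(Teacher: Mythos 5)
Your overall strategy is the classical Hayman-style route through $F=\sum_m 1/(f-a_m)$, whereas the paper follows Cherry--Ye: at each $z$ it picks the index $s$ minimising $|f-a_s|$, bounds $(q-1)\log^+|f|$ by $\sum_{m\neq s}\log|f-a_m|$ plus errors, and then absorbs the single dangerous term via the trivial pointwise bound $\bigl|g/(f-a_s)\bigr|\le\sum_m\bigl|g/(f-a_m)\bigr|$. That structural difference matters here, because the paper never needs a lower bound on the minimal distance $|f-a_s|$ itself; only the $q-1$ non-minimal distances are estimated against $l$, which is exactly where the coefficient $q-1$ in front of $\int\log^+(2/l)$ comes from.

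Your version has a genuine gap at what you yourself identify as the hardest step. The claimed pointwise inequality
\[
\sum_m\log^+\frac{1}{|f-a_m|}\le \log^+|F|+(q-1)\log^+\frac{2}{l}+(q-1)\log 2
\]
is false. Take $q=2$ and a point where $f=(a_1+a_2)/2$: then $F=\frac{2}{a_2-a_1}+\frac{2}{a_1-a_2}=0$ while $|f-a_1|=|f-a_2|=l/2$, so the left side is $2\log^+(2/l)$ but the right side is only $\log^+(2/l)+\log 2$. The problem is your ``complementary case'': when every $|f-a_m|$ is merely bounded below by $l/(4(q-1))$, you get $q$ copies of $\log^+\bigl(4(q-1)/l\bigr)$, not $q-1$ copies of $\log^+(2/l)$; cancellation in the sum $F$ can make $\log^+|F|$ useless exactly there. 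A corrected version of this lemma (as in Hayman) carries a coefficient $q$ and worse constants, so even after repair your route cannot reproduce the precise remainder $\mathcal{R}(r,f,g)$ stated in the theorem. A second, lesser issue is the step $N(r,F)-N(r,1/F)=\sum_m N(r,1/(f-a_m))-N(r,f)$ ``up to small-function corrections'': $F$ is a ratio of two expressions built from $f$ and the $a_m$, its zeros and the cancellations between numerator and denominator are not controlled by $\sum_m T(r,a_m)$ alone, and the ilc-terms of $F$ and $\Phi$ at $0$ do not appear in $\mathcal{R}$ and hence must cancel exactly, which you have not verified. Both difficulties disappear in the paper's argument, which applies Jensen only to the individual factors $f-a_m$ and to $g$, and keeps the $g$-dependence inside the single integral $\frac{1}{2\pi}\int\log\bigl(\sum_m|g/(f-a_m)|\bigr)\,d\theta$ from the outset.
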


\begin{proof}
We follow parts of the proof of the second main theorem in \cite{cherryy:01}. Let $z=re^{i\theta}$ and choose $s\in\{1,\ldots,q\}$ such that $|f(z)-a_{s}(z)|\leq|f(z)-a_m(z)|$ for all $m\in\{1,\ldots,q\}$. If $q\geq 2$ then the number $s$ generally depends on $z$. In this case, moreover,
	\begin{equation*}
	\begin{split}
	l(z)& \leq \min_{1\leq j\leq q \atop j\not=s}|a_{s}(z)-a_j(z)|
	\leq \min_{1\leq j\leq q \atop j\not=s}\left\{|f(z)-a_{s}(z)| +|f(z)-a_j(z)|\right\}\\
	&\leq |f(z)-a_{s}(z)| +|f(z)-a_m(z)|
	\leq 2|f(z)-a_m(z)|
	\end{split}
	\end{equation*}
for all $m\in\{1,\ldots,q\}\setminus\{s\}$, and so
	\begin{equation*}\label{xam}
	\begin{split}
	\log|f(z)-a_m(z)| &=\log^+|f(z)-a_m(z)|-\log^+\frac{1}{|f(z)-a_m(z)|}\\
	&\geq \log^+|f(z)-a_m(z)| - \log^+\frac{2}{l(z)}.
	\end{split}
	\end{equation*}
Therefore,
	\begin{equation}\label{mineq}
	\begin{split}
	(q-1)\log^+|f(z)| &\leq \sum_{m=1, \atop
	 m\not=s}^q\left(\log^+|f(z)-a_m(z)| +\log^+|a_m(z)|\right)
	 +(q-1)\log 2\\
	 &\leq \sum_{m=1, \atop
	 m\not=s}^q\log|f(z)-a_m(z)|
	 +\sum_{m=1, \atop
	 m\not=s}^q\log^+|a_m(z)|\\ &\quad+
	(q-1)\log^+
	 \frac{2}{l(z)}+(q-1)\log 2.\\
	\end{split}
	\end{equation}
Furthermore, we have
	\begin{equation}\label{mineq2}
	\begin{split}
	\sum_{m=1, \atop
	 m\not=s}^q\log|f(z)-a_m(z)|
	 &= \sum_{m=1}^q\log|f(z)-a_m(z)| - \log |g(z)|
	 +\log\left|\frac{g(z)}{f(z)-a_{s}(z)}\right|.
	\end{split}
	\end{equation}
The last term on the right-hand side of \eqref{mineq2} contains a term depending on $s$, which, in turn, depends on $z$. We will now make equation \eqref{mineq2} independent of $s$ by adding a number of terms to its right side. This yields the inequality
	\begin{equation}\label{mineq3}
	\begin{split}
	\sum_{m=1, \atop
	 m\not=s}^q\log|f(z)-a_m(z)| &\leq
	  \sum_{m=1}^q\log|f(z)-a_m(z)| - \log |g(z)|
	 +\log\left(\sum_{m=1}^q\left|\frac{g(z)}{f(z)-a_{m}(z)}\right|\right),
	\end{split}
	\end{equation}
which also holds if $q=1$. We may now average over all directions $\theta\in[0,2\pi)$. By combining \eqref{mineq} and \eqref{mineq3} we then have
	\begin{equation}\label{eq4}
	\begin{split}
	(q&-1)m(r,f) \\ &\leq \frac{1}{2\pi}\int_0^{2\pi}\sum_{m=1}^q\log|f(re^{i\theta})-a_m(re^{i\theta})|d\theta
	- \frac{1}{2\pi}\int_0^{2\pi} \log |g(re^{i\theta})|d\theta \\ &\quad
	 +\frac{1}{2\pi}\int_0^{2\pi}
	 \log\left(\sum_{m=1}^q\left|\frac{g(re^{i\theta})}{f(re^{i\theta})-a_{m}(re^{i\theta})}\right|\right)d\theta
	+\frac{1}{2\pi}\int_0^{2\pi}\sum_{m=1}^q\log^+|a_m(re^{i\theta})|d\theta \\&\quad+
	\frac{(q-1)}{2\pi}\int_0^{2\pi}\log^+
	 \frac{2}{l(re^{i\theta})}d\theta+(q-1)\log 2.\\
	\end{split}
	\end{equation}
The Jensen's formula implies that
	\begin{equation}\label{artinwhaples2}
	\frac{1}{2\pi} \int_0^{2\pi}\log|h(re^{i\theta})|d\theta
	= N\left(r,\frac{1}{h}\right)-N(r,g) + \log|\textrm{ilc}(h,0)|
	\end{equation}	
for any $h\in \mathcal{M}_R\setminus\{0\}$. By combining \eqref{eq4} and \eqref{artinwhaples2}, we have	
	\begin{equation}\label{eq5}
	\begin{split}
	(q&-1)m(r,f) \\ &\leq   \sum_{m=1}^q N\left(r,\frac{1}{f-a_m}\right)-
	q N(r,f)+  N(r,g)-N\left(r,\frac{1}{g}\right) \\
	&\quad +\sum_{m=1}^q T(r,a_m)
	 +\frac{1}{2\pi}\int_0^{2\pi}
	 \log\left(\sum_{m=1}^q\left|\frac{g(re^{i\theta})}{f(re^{i\theta})-a_{m}(re^{i\theta})}\right|\right)d\theta \\&\quad+\frac{q-1}{2\pi}\int_0^{2\pi}\log^+
	 \frac{2}{l(re^{i\theta})}d\theta+ (q-1)\log 2\\ &\quad
	+\sum_{m=1}^q \log|\textrm{ilc}(f-a_m,0)|-\log|\textrm{ilc}(g,0)|.
	\end{split}
	\end{equation}
The assertion follows by adding $(q-1)N(r,f)$ to both sides of \eqref{eq5}.
\end{proof}

\section{Linear differential operator}

We will now specialize Theorem \ref{NTmainthm} by choosing $g$ to be a linear differential operator applied to the function $f$, meromorphic in the complex plane. Let
    $$
    \mathcal{S}(f)=\{h\in\mathcal{M}_\infty:T(r,h)=o(T(r,f))\mbox{ as } r\to \infty,\, r\not\in E\}
    $$
where  $E\subset (0,\infty)$ is a set having finite linear measure, and set $\mathcal{M}:=\mathcal{M}_\infty$ for brevity. By choosing
	$$
	g=f^{(n)}+\alpha_{n-1}f^{(n-1)}+\cdots+\alpha_1 f' + \alpha_0 f,
	$$
in Theorem \ref{NTmainthm}, where $\alpha_0,\ldots,\alpha_{n-1}\in \mathcal{S}(f)$, we can show that the remainder term in \eqref{mainineq} is small. This yields the following generalization of Nevanlinna's second main theorem for general linear differential operators.

\begin{theorem}\label{NTmainthmlin}
Let $L:\mathcal{M}\to\mathcal{M}$ be given by
	\begin{equation}\label{linop}
	L(h)=h^{(n)}+\alpha_{n-1}h^{(n-1)}+\cdots+\alpha_1 h' + \alpha_0 h
	\end{equation}
where $\alpha_0,\ldots,\alpha_{n-1}\in \mathcal{S}(f)$, and let $f\in\mathcal{M}\setminus\ker(L)$. If $a_1,\ldots,a_q$ are $q\geq1$ different elements of $\ker(L) \cap \mathcal{S}(f)$, then
	\begin{equation}\label{mainineqlin}
	\begin{split}
	(q-1)T(r,f)+N_{L(f)}(r,f)&\leq N(r,f)+\sum_{j=1}^q N\left(r,\frac{1}{f-a_j}\right)+S(r,f),
	\end{split}
	\end{equation}	
where
	$$
	N_{L(f)}(r,f) = 2N(r,f)-N(r,L(f))+N\left(r,\frac{1}{L(f)}\right)
	$$
and
	\begin{equation*}
	S(r,f) = o(T(r,f))
	\end{equation*}
as $r\to \infty$ outside a set of finite linear measure.
\end{theorem}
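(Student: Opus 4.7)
The plan is to specialize Theorem \ref{NTmainthm} to $R=\infty$ with the choice $g=L(f)$, and then show that every piece of the remainder term $\mathcal{R}(r,f,L(f))$ is absorbed into $S(r,f)=o(T(r,f))$ as $r\to\infty$ outside an exceptional set of finite linear measure. The crucial structural fact is that each $a_j$ lies in $\ker(L)$, so that by linearity $L(f)=L(f-a_j)$ for every $j$; this converts the mysterious expression $L(f)/(f-a_j)$ into a genuine logarithmic-derivative-type quantity $L(f-a_j)/(f-a_j)$, on which Nevanlinna's lemma on the logarithmic derivative can be brought to bear.

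I would go through the five pieces of $\mathcal{R}(r,f,L(f))$ in turn. The term $\sum_{m=1}^{q}T(r,a_m)$ is $o(T(r,f))$ directly from $a_m\in\mathcal{S}(f)$. For the integral
\[
\frac{1}{2\pi}\int_0^{2\pi}\log\biggl(\sum_{m=1}^{q}\Bigl|\tfrac{L(f)(re^{i\theta})}{f(re^{i\theta})-a_m(re^{i\theta})}\Bigr|\biggr)d\theta,
\]
I would use $\log(x_1+\cdots+x_q)\le\log q+\sum_m\log^+ x_m$, and then rewrite $L(f)/(f-a_m)=L(f-a_m)/(f-a_m)$. Expanding $L$ gives the sum
\[
\frac{L(f-a_m)}{f-a_m}=\sum_{k=0}^{n}\alpha_k\,\frac{(f-a_m)^{(k)}}{f-a_m}\qquad(\alpha_n=1),
\]
so the proximity function is bounded by $\sum_{k=0}^{n}m(r,\alpha_k)+\sum_{k=1}^{n}m\!\bigl(r,(f-a_m)^{(k)}/(f-a_m)\bigr)+O(1)$. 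Each $m(r,\alpha_k)\le T(r,\alpha_k)=o(T(r,f))$ by hypothesis, and the higher-order version of the logarithmic derivative lemma applied to $f-a_m$, combined with $T(r,f-a_m)=T(r,f)+o(T(r,f))$, shows the remaining terms are $S(r,f-a_m)=S(r,f)$ outside a set of finite linear measure.

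For the $l(z)$ term I would bound $\log^+(2/l(z))\le\log 2+\sum_{1\le i<j\le q}\log^+\bigl(1/|a_i(z)-a_j(z)|\bigr)$; integrating gives at most $\sum_{i<j}m(r,1/(a_i-a_j))+O(1)$, and since $\mathcal{S}(f)$ is closed under subtraction and reciprocation up to an $O(1)$ in the characteristic, each such proximity function is $o(T(r,f))$. The two $\mathrm{ilc}$ contributions and the constant $(q-1)\log 2$ are absorbed into $O(1)\subset S(r,f)$. Assembling the five estimates yields $\mathcal{R}(r,f,L(f))=o(T(r,f))$ off an exceptional set of finite linear measure, which, inserted into \eqref{mainineq}, gives \eqref{mainineqlin}.

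The main obstacle is the second integral: one has to be careful to rewrite $L(f)/(f-a_m)$ using $L(a_m)=0$ before the logarithmic derivative lemma becomes applicable, and then to invoke its higher-order form for the derivatives $(f-a_m)^{(k)}/(f-a_m)$ together with uniform control of the exceptional sets across the finitely many indices $m$ and $k$, which is routine but the one place the hypothesis $\alpha_j\in\mathcal{S}(f)$ is genuinely used.
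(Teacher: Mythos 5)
Your proposal is correct and follows essentially the same route as the paper: specialize Theorem \ref{NTmainthm} with $g=L(f)$, use the linearity of $L$ together with $a_j\in\ker(L)$ to rewrite $L(f)/(f-a_j)$ as $L(f-a_j)/(f-a_j)$, and dispose of that term via the lemma on the logarithmic derivative, with the remaining pieces of $\mathcal{R}(r,f,L(f))$ absorbed into $S(r,f)$ because the $a_j$ and the coefficients lie in $\mathcal{S}(f)$. Your write-up merely fills in details the paper leaves implicit (the termwise expansion of $L(f-a_m)/(f-a_m)$ and the estimate for the $\log^+(2/l)$ integral), so there is no substantive difference.
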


\begin{proof}
It is sufficient to estimate the remainder $\mathcal{R}(r,f,g)$ in Theorem \ref{NTmainthm}. Since $a_1,\ldots,a_q\in \mathcal{S}(f)$, it follows that
	\begin{equation*}
	\sum_{m=1}^q T(r,a_m)+\frac{q-1}{2\pi}\int_0^{2\pi}\log^+
	 \frac{2}{l(re^{i\theta})}\,d\theta = o(T(r,f))
	\end{equation*}
as $r\to \infty$ such that $r\not\in E$. Since the term
	\begin{equation*}
	(q-1)\log 2	+\sum_{m=1}^q \log|\textrm{ilc}(f-a_m,0)|-\log|\textrm{ilc}(L(f),0)|
	\end{equation*}
is constant, it only remains to show that
	\begin{equation}\label{small}
	 \frac{1}{2\pi}\int_0^{2\pi}\log\left(\sum_{m=1}^q\left|\frac{(L\circ f)(re^{i\theta})}{f(re^{i\theta})-a_{m}(re^{i\theta})}\right|\,d\theta\right) = o(T(r,f))
	\end{equation}
when $r\to \infty$ outside of the set $E$. Since $a_j\in\ker(L)$ and $L$ is linear, it follows that
	$$
	L(f)=L(f)-L(a_j)=L(f-a_j)
	$$
for all $j=1,\ldots,q$. Therefore
	\begin{equation}\label{small2}
	\begin{split}
	\frac{1}{2\pi}\int_0^{2\pi} \log\left(\sum_{m=1}^q\left|\frac{(L\circ f)(re^{i\theta})}{f(re^{i\theta})-a_{m}(re^{i\theta})}\right|\,d\theta\right)
	\leq \sum_{j=1}^q m\left(r,\frac{L(f-a_j)}{f-a_j}\right) +O(1).
	\end{split}
	\end{equation}
Since $a_j\in \mathcal{S}(f)$ for all $j=1,\ldots,q$, the equation \eqref{small} follows by combining \eqref{small2} with the lemma on the logarithmic derivative.
\end{proof}

A theorem similar to Theorem \ref{NTmainthmlin}, but for meromorphic functions in a disc, can be stated and proved in an almost identical way to the proof above.

\begin{definition}
Let $L:\mathcal{M}\to\mathcal{M}$ be linear, and let $a\in\ker{L}$. If the preimage of $a$ under $f$ (understood as a multiset where each point is repeated the number of times indicated by its multiplicity) is contained in the preimage of $0$ under $L(f)$, then $a$ is said to be \textit{$(L,f)$-exceptional}.
\end{definition}

Note that functions with empty preimages are automatically $(L,f)$-exceptional for any $L$. Therefore, in particular, Picard exceptional values of $f\in\mathcal{M}$ are 
$(f',f)$-exceptional. Hence the following two corollaries of Theorem \ref{NTmainthmlin} are generalizations of Picard's Theorem.

\begin{corollary}\label{picardcormero}
Let $f$ be meromorphic, and let $L$ be an $n^\textrm{th}$ order linear differential operator with coefficients in $\mathcal{S}(f)$. If $a_1,\ldots,a_{n+2}\in\mathcal{S}(f)$ are $n+2$ distinct $(L,f)$-exceptional functions, then $L(f)=0$.
\end{corollary}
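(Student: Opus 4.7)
The plan is to argue by contradiction: assume $L(f)\neq 0$ so that $f\in\mathcal{M}\setminus\ker(L)$, and apply Theorem~\ref{NTmainthmlin} with $q=n+2$ to obtain
\begin{equation*}
(n+1)T(r,f) + N_{L(f)}(r,f) \leq N(r,f) + \sum_{j=1}^{n+2} N\!\left(r,\frac{1}{f-a_j}\right) + S(r,f).
\end{equation*}

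The key input is the $(L,f)$-exceptional hypothesis: every zero of $f-a_j$ is a zero of $L(f)$ of at least the same multiplicity. Away from the isolated coincidence set $\{a_j=a_k\}$ (whose counted contribution is controlled by $\sum_{j<k}T(r,a_j-a_k)=S(r,f)$), the zero sets of the functions $f-a_j$ are pairwise disjoint, so one obtains $\sum_{j=1}^{n+2} N(r,1/(f-a_j))\leq N(r,1/L(f))+S(r,f)$. Substituting this back and expanding $N_{L(f)}(r,f)=2N(r,f)-N(r,L(f))+N(r,1/L(f))$ cancels the $N(r,1/L(f))$ terms and yields
\begin{equation*}
(n+1)T(r,f)+N(r,f)\leq N(r,L(f))+S(r,f).
\end{equation*}

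Next I would control $N(r,L(f))$ in terms of $N(r,f)$. At each pole of $f$ of multiplicity $m$ at which the coefficients $\alpha_k$ are regular, $L(f)$ has a pole of multiplicity at most $m+n$ (dominated by $f^{(n)}$), while the contribution from poles of the $\alpha_k$'s is absorbed in $\sum_{k}T(r,\alpha_k)=S(r,f)$; hence $N(r,L(f))\leq N(r,f)+n\overline{N}(r,f)+S(r,f)$. Combining with the previous display gives
\begin{equation*}
(n+1)T(r,f)\leq n\overline{N}(r,f)+S(r,f)\leq nT(r,f)+S(r,f),
\end{equation*}
that is, $T(r,f)=S(r,f)$ as $r\to\infty$ outside a set of finite linear measure. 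For non-constant $f$ this is impossible, and a direct verification in the trivial constant case (where having $n+2$ distinct elements of $\ker(L)\cap\mathcal{S}(f)$ forces $\alpha_0\equiv 0$) again gives $L(f)=0$. The main obstacle is the bookkeeping in the first step: the target functions $a_j$ may coincide at isolated points, causing preimages of $f$ to overlap there, and one must verify that the resulting over-count is absorbed into $S(r,f)$ rather than producing a term of order $T(r,f)$.
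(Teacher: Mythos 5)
Your proposal is correct and follows essentially the same route as the paper: apply Theorem~\ref{NTmainthmlin} with $q=n+2$, use the $(L,f)$-exceptional hypothesis to absorb $\sum_j N(r,1/(f-a_j))$ into $N(r,1/L(f))$, and bound $N(r,L(f))-N(r,f)$ by $nT(r,f)$ to reach the contradiction $(n+1)T(r,f)\leq nT(r,f)+S(r,f)$. Your extra bookkeeping at points where two targets $a_j$, $a_k$ coincide (bounding the resulting over-count by $\sum_{j<k}T(r,a_j-a_k)=S(r,f)$) is a worthwhile refinement of a step the paper states without comment.
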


\begin{proof}
If $L(f)\not=0$, then Theorem \ref{NTmainthmlin} yields
	\begin{equation}\label{Trg}
	\begin{split}
    (n+1)T(r,f) &\leq N(r,L(f))-N(r,f)+\sum_{j=1}^{n+2} N\left(r,\frac{1}{f-a_j}\right) - N\left(r,\frac{1}{L(f)}\right) + o(T(r,f))\\
	\end{split}
	\end{equation}
where $r\to \infty$ outside of an exceptional set $E$. Now, since $L(f)$ is of order $n$, it follows that
    \begin{equation}\label{NLf}
    N(r,L(f))-N(r,f) \leq n T(r,f)
    \end{equation}
for all $r>0$. Moreover, since $a_1,\ldots,a_{n+2}$ are $(L,f)$-exceptional, we have
	$$
	\sum_{j=1}^{n+2} N\left(r,\frac{1}{f-a_j}\right) \leq N\left(r,\frac{1}{L(f)}\right),
	$$
which, together with \eqref{NLf} and \eqref{Trg}, yields a contradiction. Hence $L(f)=0$.
\end{proof}

The following example shows that $n+2$ cannot be replaced by $n+1$ in Corollary~\ref{picardcormero} in the case $n=2$.

\begin{example}\label{jacobiexample}\rm
The Jacobi elliptic function $f(z)=\textrm{sn}(z,k)$ satisfies the differential equation
    \begin{equation}\label{jacobieq}
    f''= 2k^2 f^3 - (1+k^2)f
    \end{equation}
where $k\in(0,1)$ is the elliptic modulus. By \eqref{jacobieq} it follows that $L(f)=f''$ vanishes if and only if $f$ attains one of the values in the set
    \begin{equation}\label{values}
    \left\{0,\frac{1}{\sqrt{2} k}\sqrt{1+k^2} ,-\frac{1}{\sqrt{2} k}\sqrt{1+k^2} \right\}.
    \end{equation}
Therefore the elements of \eqref{values} are three distinct $(L,f)$-exceptional functions, but clearly $L(f)\not= 0$.
\end{example}

In the case of entire functions the order of the linear operator does not affect the number of required target functions.

\begin{corollary}\label{picardcor}
Let $f$ be entire, and let $L$ be a linear differential operator with coefficients in $\mathcal{S}(f)$. If $a\in\mathcal{S}(f)$ and $b\in\mathcal{S}(f)$ are two distinct $(L,f)$-exceptional functions of $f$, then $L(f)=0$.
\end{corollary}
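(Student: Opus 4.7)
The plan is to apply Theorem \ref{NTmainthmlin} with $q=2$, taking the two $(L,f)$-exceptional functions $a,b$ as the targets, and then exploit the hypothesis that $f$ is entire to show that each term on the right side is either absorbed into $S(r,f)$ or cancels against $N(r,1/L(f))$ on the left.

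Assume for contradiction that $L(f)\not\equiv0$, so that Theorem~\ref{NTmainthmlin} applies. Since $f$ is entire we have $N(r,f)=0$, whence
\begin{equation*}
N_{L(f)}(r,f)=N\!\left(r,\frac{1}{L(f)}\right)-N(r,L(f)),
\end{equation*}
and the main inequality \eqref{mainineqlin} with $q=2$ becomes
\begin{equation*}
T(r,f)+N\!\left(r,\frac{1}{L(f)}\right)-N(r,L(f))\le N\!\left(r,\frac{1}{f-a}\right)+N\!\left(r,\frac{1}{f-b}\right)+S(r,f).
\end{equation*}

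Next I would bound $N(r,L(f))$. Because $f$ is entire, every derivative $f^{(k)}$ is entire, so the only poles of $L(f)=\sum_{k=0}^{n}\alpha_k f^{(k)}$ (with $\alpha_n=1$) come from the coefficients $\alpha_0,\ldots,\alpha_{n-1}\in\mathcal{S}(f)$, giving
\begin{equation*}
N(r,L(f))\le\sum_{k=0}^{n-1}N(r,\alpha_k)\le\sum_{k=0}^{n-1}T(r,\alpha_k)=o(T(r,f))
\end{equation*}
outside a set of finite linear measure. Then I would use the $(L,f)$-exceptional hypothesis: every preimage of $a$ (resp.\ $b$) under $f$, counted with multiplicity, is a zero of $L(f)$ of at least the same multiplicity. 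The only possible overcount when summing over $a$ and $b$ comes from points where $a(z)=b(z)$, whose total multiplicity is controlled by $N(r,1/(a-b))\le T(r,a-b)+O(1)=o(T(r,f))$. Hence
\begin{equation*}
N\!\left(r,\frac{1}{f-a}\right)+N\!\left(r,\frac{1}{f-b}\right)\le N\!\left(r,\frac{1}{L(f)}\right)+S(r,f).
\end{equation*}

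Substituting both estimates into the displayed inequality, the $N(r,1/L(f))$ terms cancel and one obtains $T(r,f)\le S(r,f)$, which is a contradiction (a transcendental characteristic cannot be asymptotically dominated by a quantity of the form $o(T(r,f))$ outside a set of finite linear measure, while the constant case is excluded since $L(f)\not\equiv0$ on a constant entire $f$ would force the preimages to be all of $\C$, violating exceptionality unless $L(f)\equiv0$). Therefore $L(f)=0$. The main subtlety is the careful handling of the coincidence set $\{a=b\}$ when combining the two exceptional conditions, but since $a-b\in\mathcal{S}(f)$ this overlap is absorbed into $S(r,f)$ exactly as in the proof of Corollary~\ref{picardcormero}.
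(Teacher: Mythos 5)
Your proposal is correct and follows essentially the same route as the paper: apply Theorem \ref{NTmainthmlin} with $q=2$, use $N(r,f)=0$ for entire $f$, cancel the counting functions of the $a$- and $b$-points against $N\left(r,\frac{1}{L(f)}\right)$ via the $(L,f)$-exceptionality, and derive the contradiction $T(r,f)\le S(r,f)$. You are in fact slightly more careful than the paper, which silently absorbs $N(r,L(f))$ into the error term and does not discuss the coincidence set $\{a=b\}$; both of your refinements are correct and harmless.
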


\begin{proof}
Assume that $L(f)\not=0$. Then, since $f$ is entire, Theorem \ref{NTmainthmlin} yields
	\begin{equation}\label{Trge}
	\begin{split}
	T(r,f) &\leq N\left(r,\frac{1}{f-a}\right)+N\left(r,\frac{1}{f-b}\right) - N\left(r,\frac{1}{L(f)}\right) + o(T(r,f))\\
	\end{split}
	\end{equation}
where $r\to \infty$ outside of an exceptional set $E$. Since $a$ and $b$ are $(L,f)$-exceptional, it follows that
	$$
	N\left(r,\frac{1}{f-a}\right)+N\left(r,\frac{1}{f-b}\right) \leq N\left(r,\frac{1}{L(f)}\right),
	$$
and so inequality \eqref{Trge} leads to a contradiction. Thus $L(f)=0$.
\end{proof}

Corollary \ref{picardcor} says that if there are enough points where an entire function $f$ looks like a solution of a linear differential equation, then $f$ must indeed be a solution of the equation. This statement can be made more precise by introducing deficiencies with respect to linear differential operators. Towards this end, let $a\in\ker(L)$, and let
    \begin{equation}\label{countingfunction}
    N{|_{ f=a}} \left(r,\frac{1}{L(f)}\right)
    \end{equation}
be the integrated counting function for those zeros of $L(f)$, where simultaneously $f=a$. Note that even though it is required that $f=a$ for a zero of $L(f)$ to be counted in \eqref{countingfunction}, multiplicities are counted only according to multiplicities of zeros of $L(f)$. Multiplicities of the $a$-points of $f$ do not contribute to \eqref{countingfunction}. Using \eqref{countingfunction} we define \textit{the index of multiplicity with respect to} $L(f)$ by
    \begin{equation}\label{def1}
    \theta_{L,f}(a):= \liminf_{r\to \infty} \frac{1}{T(r,f)} \left(N{|_{ f=a}} \left(r,\frac{1}{L(f)}\right)\right),
    \end{equation}
which reduces to the usual index of multiplicity $\theta(a,f)$ when $L(f)=f'$. 
For a particular operator $L$ we will sometimes write $ \theta_{L(f)}$ instead of $\theta_{L,f}$ for brevity.
We also define
    \begin{equation}\label{def2}
    \theta_{L,f}(\infty):= \liminf_{r\to \infty} \frac{2N(r,f)-N\left(r,L(f)\right)}{T(r,f)},
    \end{equation}
which is similarly a generalization of $\theta(\infty,f)$. Note that even though $0\leq\theta(a,f)\leq 1$, there is in principle no upper bound for $\theta_{L,f}(a)$ implied by the definition itself. Also,
    $$
    -k+1\leq \theta_{L,f}(\infty) \leq 1,
    $$
where $k$ is the degree of the highest derivative of $L(f)$. By following the standard proof of Nevanlinna's deficiency relation (see, for example, \cite[pp.~43--44]{hayman:64}) and using definitions \eqref{def1} and \eqref{def2}, we obtain the following consequence of Theorem \ref{NTmainthmlin}.

\begin{corollary}\label{rel}
Let $f\not\in\ker(L)$ be a meromorphic function. Then $\theta_{L,f}(a)=0$
except for at most countably many $a\in\ker(L)\cap \mathcal{S}(f)$, and
    \begin{equation}\label{relsum}
    \sum_{a} \left(\delta(a,f)+ \theta_{L,f}(a)\right)\leq 2,
    \end{equation}
where the summation is over all elements of the set $(\ker(L)\cap \mathcal{S}(f))\cup\{\infty\}$.
\end{corollary}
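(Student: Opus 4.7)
The plan is to adapt the classical derivation of Nevanlinna's deficiency relation \cite[pp.~43--44]{hayman:64}, using Theorem~\ref{NTmainthmlin} in place of the standard second main theorem. First I would fix an arbitrary finite collection $a_1,\ldots,a_q\in\ker(L)\cap\mathcal{S}(f)$, expand the ramification-type term as
\begin{equation*}
N_{L(f)}(r,f)=2N(r,f)-N(r,L(f))+N\!\left(r,\tfrac{1}{L(f)}\right),
\end{equation*}
and bound $N(r,1/L(f))$ from below by $\sum_{j=1}^{q}N|_{f=a_j}(r,1/L(f))$ up to an $o(T(r,f))$ overcount arising from points where several of the $a_j$'s coincide; since each difference $a_j-a_k\in\mathcal{S}(f)$ has zero-counting function $o(T(r,f))$, the correction is absorbed into $S(r,f)$.

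Substituting into Theorem~\ref{NTmainthmlin}, dividing through by $T(r,f)$, and passing to $\liminf$ as $r\to\infty$ outside the exceptional set for $S(r,f)$, I would apply subadditivity of $\liminf$ on the left, together with the classical bounds $\limsup_{r}N(r,f)/T(r,f)=1-\delta(\infty,f)$ and $\limsup_{r}N(r,1/(f-a_j))/T(r,f)\leq 1-\delta(a_j,f)$ on the right. The resulting inequality
\begin{equation*}
(q-1)+\theta_{L,f}(\infty)+\sum_{j=1}^{q}\theta_{L,f}(a_j)\leq\bigl(1-\delta(\infty,f)\bigr)+\sum_{j=1}^{q}\bigl(1-\delta(a_j,f)\bigr)
\end{equation*}
rearranges immediately to the finite-$q$ form of~\eqref{relsum}.

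Since $\delta(a,f)\geq 0$ and $\theta_{L,f}(a)\geq 0$ for each finite $a\in\ker(L)\cap\mathcal{S}(f)$, the finite-$q$ bound forces $\{a:\delta(a,f)+\theta_{L,f}(a)\geq 1/n\}$ to be finite for every $n\in\N$, which gives the countability statement; the full inequality~\eqref{relsum} then follows by taking the supremum over finite subcollections of $(\ker(L)\cap\mathcal{S}(f))\cup\{\infty\}$. The main technical hurdle I anticipate is the overcounting step: at coincidence points of two or more of the $a_j$'s where $L(f)$ has a high-order zero, the naive inequality $N(r,1/L(f))\geq\sum_{j}N|_{f=a_j}(r,1/L(f))$ may fail, so the excess multiplicity must be carefully estimated using the fact that the coincidence sets have counting functions of order $o(T(r,f))$. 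All remaining steps --- subadditivity of $\liminf$, absorption of $S(r,f)/T(r,f)=o(1)$ along a common subsequence outside the exceptional set, and the countability argument --- are classical bookkeeping.
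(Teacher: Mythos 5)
Your proposal takes essentially the same route as the paper, which in fact records no proof of Corollary~\ref{rel} at all beyond the remark that it follows ``by following the standard proof of Nevanlinna's deficiency relation'' in \cite[pp.~43--44]{hayman:64} with Theorem~\ref{NTmainthmlin} replacing the classical second main theorem --- precisely the argument you outline (rearrange \eqref{mainineqlin}, divide by $T(r,f)$, take $\liminf$ outside the exceptional set, use the definitions \eqref{def1}--\eqref{def2} and superadditivity, then exhaust by finite subcollections for the countability claim). The one point you rightly single out as the technical hurdle --- that at a common zero of $a_j-a_k$ the sum $\sum_j N|_{f=a_j}\bigl(r,\tfrac{1}{L(f)}\bigr)$ overcounts $N\bigl(r,\tfrac{1}{L(f)}\bigr)$ by the full multiplicity of the zero of $L(f)$ there, which is not a priori bounded by the multiplicity of the zero of $a_j-a_k$, so that the smallness of $N\bigl(r,\tfrac{1}{a_j-a_k}\bigr)$ alone does not immediately absorb the excess --- is passed over in silence by the paper (it is vacuous in Hayman's setting of distinct constant targets), so your sketch is at least as complete as the source on the only step that is not classical bookkeeping.
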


Corollary \ref{rel} implies the usual deficiency relation by substituting $L(f)=f'$. The following continuation of example \ref{jacobiexample} shows that the total deficiency sum of \eqref{relsum} can be strictly greater that 2, if the term $\theta_{L,f}(\infty)$ is omitted from \eqref{relsum}. The same example also shows that the upper bound in \eqref{relsum} can be attained.

\begin{example}\label{jacobiexample2}\rm
Let $f(z)=\textrm{sn}(z,k)$ be a solution of \eqref{jacobieq}. In example \ref{jacobiexample} we saw that $L(f)=f''$ vanishes if and only if $f$ takes one of the values in the set \eqref{values}. By a result due to Mohon'ko (see, for example, \cite[Proposition 9.2.3]{laine:93}) and since $f$ also satisfies
    $$
    (f')^2=(1-f^2)(1-k^2f^2),
    $$
it follows that none of values in \eqref{values} are deficient in the usual sense. (Alternatively, this follows by the fact that elliptic functions attain all values in a period parallelogram the same finite number of times, counting multiplicity.) Hence
    $$
    \theta_{f''}(0)=\theta_{f''}\left(\frac{1}{\sqrt{2} k}\sqrt{1+k^2}\right)=\theta_{f''}\left(-\frac{1}{\sqrt{2} k}\sqrt{1+k^2}\right) = 1,
    $$
and so
    $$
    \sum_{a\not=\infty} \left(\delta(a,f)+ \theta_{f''}(a)\right) \geq 3.
    $$
Therefore it follows by Corollary \ref{rel} that $\theta_{f''}(\infty) = -1$, and so, again by the same corollary, we have
    $$
    \sum_{a} \left(\delta(a,f)+ \theta_{f''}(a)\right) =2.
    $$
\end{example}

\section{General linear operator}

By demanding that the operator $L:\mathcal{M}\to\mathcal{M}$ is linear, and that the target functions $a_1,\ldots,a_q$ lie in the intersection of the kernel of $L$ and the field $\mathcal{S}(f)$, we obtain the following theorem, which incorporates and generalizes Nevanlinna's second main theorem in the complex plane, and its difference analogue from \cite{halburdk:06AASFM}, as well as the $q$-difference second main theorem from \cite{barnetthkm:07}.

\begin{theorem}\label{NTmainthm2}
Let $\mathcal{N}$ be a subfield of $\mathcal{M}$  and let $f\in\mathcal{N}\setminus\ker(L)$, where $L:\mathcal{M}\to\mathcal{M}$ is a linear operator such that
	\begin{equation}\label{assumption}
	m\left(r,\frac{L(f)}{f}\right)=o(T(r,f))
	\end{equation}
as $r\to\infty$ outside of an exceptional set $E\subset(0,\infty)$. If $a_1,\ldots,a_q$ are $q\geq1$ different elements of $\ker(L) \cap \mathcal{S}(f)$, then
	\begin{equation}\label{mainineq2}
	\begin{split}
	(q-1)T(r,f)+N_{L(f)}(r,f)&\leq N(r,f)+\sum_{j=1}^q N\left(r,\frac{1}{f-a_j}\right)+S(r,f),
	\end{split}
	\end{equation}	
where
	$$
	N_{L(f)}(r,f) = 2N(r,f)-N(r,L(f))+N\left(r,\frac{1}{L(f)}\right)
	$$
and
	\begin{equation*}
	S(r,f) = o(T(r,f))
	\end{equation*}
as $r\to \infty$ outside of $E$.
\end{theorem}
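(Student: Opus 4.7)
The proof strategy is to mirror essentially verbatim the proof of Theorem~\ref{NTmainthmlin}, the only substantive change being that the lemma on the logarithmic derivative is replaced throughout by the standing hypothesis \eqref{assumption}. Concretely, I would set $g=L(f)$ in Theorem~\ref{NTmainthm}. Since $f\notin\ker(L)$ we have $g\not\equiv 0$, so Theorem~\ref{NTmainthm} applies and gives inequality \eqref{mainineq} with the ramification term $N_g(r,f)$ of the required form $N_{L(f)}(r,f)$. It then remains to show that the remainder $\mathcal{R}(r,f,L(f))$ is $o(T(r,f))$ as $r\to\infty$ outside some set of finite linear measure.

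The remainder $\mathcal{R}(r,f,L(f))$ splits into four pieces, and three of them are disposed of exactly as in the proof of Theorem~\ref{NTmainthmlin}. Since each $a_m\in\mathcal{S}(f)$, the sum $\sum_{m=1}^q T(r,a_m)$ is $o(T(r,f))$ off a small exceptional set, and $(q-1)\cdot\frac{1}{2\pi}\int_0^{2\pi}\log^+(2/l(re^{i\theta}))\,d\theta$ is also $o(T(r,f))$ by standard estimates applied to the pairwise differences $a_i-a_j\in\mathcal{S}(f)$. The constant terms $(q-1)\log 2+\sum_m\log|\mathrm{ilc}(f-a_m,0)|-\log|\mathrm{ilc}(L(f),0)|$ contribute $O(1)$ and are absorbed into $S(r,f)$.

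The heart of the matter is the averaged logarithmic integral
\begin{equation*}
I(r):=\frac{1}{2\pi}\int_0^{2\pi}\log\!\left(\sum_{m=1}^q\left|\frac{L(f)(re^{i\theta})}{f(re^{i\theta})-a_m(re^{i\theta})}\right|\right)d\theta.
\end{equation*}
Here linearity of $L$ is used exactly as in the differential case: because $a_j\in\ker(L)$ we have $L(f)=L(f)-L(a_j)=L(f-a_j)$, and so the summand indexed by $m$ equals $L(f-a_m)/(f-a_m)$. Using $\log(\sum|\cdot|)\le\sum\log^+|\cdot|+\log q$ and moving to proximity functions, this gives
\begin{equation*}
I(r)\le \sum_{j=1}^q m\!\left(r,\frac{L(f-a_j)}{f-a_j}\right)+O(1),
\end{equation*}
as in \eqref{small2}. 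Applying hypothesis \eqref{assumption} to each $f-a_j$ and using $T(r,f-a_j)=T(r,f)+o(T(r,f))$, one concludes $I(r)=o(T(r,f))$ off $E$. Finally, adding $(q-1)N(r,f)$ to both sides converts $(q-1)m(r,f)$ into $(q-1)T(r,f)$ and yields \eqref{mainineq2}.

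The main obstacle to flag is that \eqref{assumption} is phrased in terms of $f$, while the estimate above requires the analogous bound for $f-a_j$. This is the natural place where the field hypothesis on $\mathcal{N}$ enters: since the $a_j$ lie in $\mathcal{S}(f)\subset\mathcal{N}$ and $f\in\mathcal{N}$, the difference $f-a_j$ belongs to $\mathcal{N}$, and the assumption that $m(r,L(h)/h)=o(T(r,h))$ holds throughout $\mathcal{N}\setminus\{0\}$ (which is what one must read into \eqref{assumption} to make the theorem applicable to an operator rather than a single function) delivers the bound. The remaining step of comparing $T(r,f-a_j)$ to $T(r,f)$ modulo $o(T(r,f))$ is routine and does not enlarge the exceptional set beyond $E$ in any essential way.
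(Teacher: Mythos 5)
Your proposal is correct and follows exactly the route the paper intends: the paper gives no separate proof of Theorem \ref{NTmainthm2}, leaving it as the evident repetition of the proof of Theorem \ref{NTmainthmlin} with the lemma on the logarithmic derivative replaced by hypothesis \eqref{assumption}. You are also right to flag that \eqref{assumption} must be read as holding for $f-a_j$ (i.e.\ across the subfield $\mathcal{N}$, which is precisely why $\mathcal{N}$ is required to be a field), since the literal single-function hypothesis $m(r,L(f)/f)=o(T(r,f))$ does not by itself control $m\left(r,L(f-a_j)/(f-a_j)\right)$.
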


Theorem \ref{NTmainthm2} implies Nevanlinna's second main theorem in the complex plane by our choosing $L(f)=f'$ and $\mathcal{N}=\mathcal{M}$. By choosing $L(f)=\Delta(f)=f(z+1)-f(z)$ and $\mathcal{N}$ to be the field of meromorphic functions of hyper-order strictly less than one, Theorem \ref{NTmainthm2} reduces into the difference analogue of the second main theorem \cite{halburdk:06AASFM,halburdkt:09}. Similarly, with the choice $L(f)=f(qz)-f(z)$, where $q\in\C\setminus\{0,1\}$, and taking $\mathcal{N}$ to be the field of zero-order meromorphic functions, we obtain the $q$-difference version of the second main theorem \cite{barnetthkm:07}.

Deficiencies can be defined as above using formulas \eqref{def1} and \eqref{def2}, where $L(f)$ is now any differential operator operating on a meromorphic function $f$ such that \eqref{assumption} is satisfied. For instance, if $L(f)=f'(z+1)$, where the hyper-order (or iterated $2$-order) $\varsigma(f)$ of $f$ satisfies $\varsigma(f)=\varsigma<1$, then by the lemma on the logarithmic derivative and its difference analogue \cite{halburdkt:09}, it follows that
    \begin{equation}\label{assumptioncalc}
    \begin{split}
    m\left(r,\frac{L(f)}{f}\right) & = m\left(r,\frac{f'(z+1)}{f(z)}\right) \\
    & \leq m\left(r,\frac{f'(z+1)}{f'(z)}\right) + m\left(r,\frac{f'(z)}{f(z)}\right) \\
    &= o\left(\frac{T(r,f')}{r^{1-\varsigma-\varepsilon}}\right) + O(\log(rT(r,f))) \\
    &= o(T(r,f)),
    \end{split}
    \end{equation}
where $r\to\infty$ outside of an exceptional set of finite logarithmic measure, and we have taken $\varepsilon>0$ such that $\varsigma+\varepsilon<1$. Hence \eqref{assumption} is satisfied, and so Theorem \ref{NTmainthm2} yields a counterpart of Corollary \ref{rel} under the assumption that $f$ is a non-constant meromorphic function of hyper-order strictly less than one. We state this result here as a proposition.

\begin{proposition}\label{rel2}
Let $f$ be a non-constant meromorphic function such that $\varsigma(f)<1$. Then $\theta_{f'(z+1)}(a)=0$
except for at most countably many $a\in\C$, and
    \begin{equation*}
    \sum_{a\in\C\cup\{\infty\}} \left(\delta(a,f)+ \theta_{f'(z+1)}(a)\right)\leq 2.
    \end{equation*}
\end{proposition}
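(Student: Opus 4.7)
The plan is to apply Theorem \ref{NTmainthm2} directly with $L(f)=f'(z+1)$ and $\mathcal{N}$ equal to the field of meromorphic functions of hyper-order strictly less than one. I would first observe that $L(a)=0$ forces $a'\equiv 0$ and hence $a\in\C$, so $\ker(L)=\C$, and every constant automatically belongs to $\mathcal{S}(f)$ since $f$ is non-constant. The estimate \eqref{assumption} is exactly the content of the calculation \eqref{assumptioncalc} preceding the proposition, so Theorem \ref{NTmainthm2} applies to any finite family $a_1,\ldots,a_q$ of distinct complex numbers.

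Fix such a family. Expanding $N_{L(f)}(r,f)=2N(r,f)-N(r,L(f))+N(r,1/L(f))$ in \eqref{mainineq2}, and using the first main theorem identities $N(r,1/(f-a_j))=T(r,f)-m(r,1/(f-a_j))+O(1)$ together with $N(r,f)=T(r,f)-m(r,f)$, I would rearrange \eqref{mainineq2} into the form
\begin{equation*}
m(r,f)+\sum_{j=1}^q m\!\left(r,\frac{1}{f-a_j}\right)+\bigl(2N(r,f)-N(r,L(f))\bigr)+N\!\left(r,\frac{1}{L(f)}\right)\leq 2T(r,f)+S(r,f)+O(1).
\end{equation*}
Since the $a_j$ are distinct, the preimages $f^{-1}(a_j)$ are pairwise disjoint, so $N(r,1/L(f))\geq\sum_{j=1}^q N|_{f=a_j}(r,1/L(f))$, which may be substituted for the fourth summand on the left.

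Next I would divide by $T(r,f)$ and take $\liminf$ as $r\to\infty$ with $r\notin E$. Since $\liminf$ is superadditive and $\liminf_{r\notin E}\geq\liminf_{r\to\infty}$ for every function, each summand on the left is bounded below by the corresponding deficiency or index of multiplicity as defined in \eqref{def1} and \eqref{def2}, yielding
\begin{equation*}
\delta(\infty,f)+\sum_{j=1}^q\delta(a_j,f)+\theta_{L,f}(\infty)+\sum_{j=1}^q\theta_{L,f}(a_j)\leq 2.
\end{equation*}
Because this holds for every finite subset $\{a_1,\ldots,a_q\}\subset\C$, the standard Nevanlinna-type argument shows that the set of $a\in\C$ with $\delta(a,f)+\theta_{L,f}(a)\geq 1/n$ is finite for each $n\in\N$; taking a countable union yields the first claim of the proposition, and passing to the supremum over finite subsets gives the bound in full generality.

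The main subtleties will be the bookkeeping with the exceptional set $E$ and with the possibly negative term $2N(r,f)-N(r,L(f))$. Both are handled by the observation that $\liminf_{r\notin E}\geq\liminf_{r\to\infty}$ for any function, so each individual $\liminf$ appearing above can be related back to the unrestricted definitions of $\delta$ and $\theta$; superadditivity of $\liminf$ is unaffected by signs, so the possibly negative contribution of $\theta_{L,f}(\infty)$ presents no additional obstacle.
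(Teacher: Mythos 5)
Your proposal is correct and follows essentially the same route as the paper: the paper's proof consists of verifying hypothesis \eqref{assumption} via the computation \eqref{assumptioncalc} and then invoking Theorem \ref{NTmainthm2} together with the standard deficiency-relation argument (as in Corollary \ref{rel}), which is exactly what you carry out in detail. Your explicit handling of the restricted counting functions, the disjointness of the preimages, and the superadditivity of $\liminf$ over the complement of the exceptional set is the correct way to fill in the steps the paper leaves to the reader.
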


Proposition \ref{rel2} is, of course, just one example of the type of results that can be obtained this way. For instance, if $L(f)=f'(qz+c)$, where $q\in\C\setminus\{0,1\}$, then we can obtain a counterpart of proposition \ref{rel2} under the stricter assumption that $f$ is of zero order by using a $q$-difference analogue of the lemma on the logarithmic derivative \cite{barnetthkm:07} in a calculation similar to \eqref{assumptioncalc}.

Proposition \ref{rel2} implies some rather surprising constraints for possible value distribution patterns of finite-order meromorphic functions.

\begin{example}\rm
Let $a\in\C$, and let $f$ be a meromorphic function of finite order such that all of its $a$-points are simple, and moreover, if $f(z)=a$ then $f'(z+1)=0$ with multiplicity $k\geq p \geq 2$.  Then the Valiron deficiency
    \begin{equation}\label{valironDef}
    \Delta(a,f)=1-\liminf_{r\to\infty}\frac{N\left(r,\frac{1}{f-a}\right)}{T(r,f)}
    \end{equation}
satisfies
    \begin{equation}\label{concl}
    \Delta(a,f)\geq 1-\frac{2}{p}.
    \end{equation}
Namely, by \eqref{valironDef} it follows that
    \begin{equation}\label{ineq1}
    T(r,f) \leq \frac{1}{1-\Delta(a,f)-\varepsilon} N\left(r,\frac{1}{f-a}\right),
    \end{equation}
where $\varepsilon>0$ and $r$ is sufficiently large. By the assumption on the locations or zeros of $f$ and its derivative function, we have
    \begin{equation}\label{ineq2}
    N{|_{ f=a}} \left(r,\frac{1}{f'(z+1)}\right) \geq p N\left(r,\frac{1}{f-a}\right),
    \end{equation}
and so, by \eqref{ineq1} and \eqref{ineq2}, it follows that
    \begin{equation}\label{theta1}
    \theta_{f'(z+1)}(a) \geq (1-\Delta(a,f)-\varepsilon)p.
    \end{equation}
On the other hand, since
    $$
    N(r,f'(z+1)) \leq 2N(r,f(z+1)) \leq 2N(r,f(z))+o\left(\frac{N(r,f)}{r^{1-\varepsilon}}\right)
    $$
for all $r$ outside of an exceptional set of finite logarithmic measure by \cite[Lemma 8.3]{halburdkt:09} (see also \cite[Lemma 2.1]{halburdk:07JPA} and \cite[Theorem 2.2]{chiangf:08}), it follows that $\theta_{f'(z+1)}(\infty)\geq 0$. Therefore proposition \ref{rel2} yields
    \begin{equation}\label{theta2}
    \theta_{f'(z+1)}(a) \leq 2.
    \end{equation}
Inequality \eqref{concl} follows by combining \eqref{theta1} and \eqref{theta2}, and by letting $\varepsilon$ tend to zero. Meromorphic functions satisfying the assumptions on the $a$-points of $f$ and the zeros of $f'$ can be constructed, for instance, by using Hadamard products.
\end{example}

The next, final example shows that certain elliptic functions are maximally deficient with respect to the second order linear difference operator. In the same way as in example \ref{jacobiexample}, the maximal deficiency sum over all finite targets turns out to be equal to three, instead of the usual two.

\begin{example}\rm
The autonomous form of the difference Painlev\'e II equation,
    \begin{equation}\label{mcmillaneq}
    f(z+1)+f(z-1)=\frac{\alpha f(z)+\beta}{1-f(z)^2},\qquad \alpha,\beta\in\C,
    \end{equation}
which is known as the McMillan map \cite{mcmillan:71}, can be solved in terms of elliptic functions.  By writing \eqref{mcmillaneq} in the form
    $$
     f(z+1)-2f(z)+f(z-1)=\frac{2(f-\gamma_1)(f-\gamma_2)(f-\gamma_3)}{1-f(z)^2},
    $$
where $\gamma_1$, $\gamma_2$ and $\gamma_3$ are the roots of the equation $2x^3+(\alpha-2)x+\beta=0$, we can see that $\Delta^2 f(z)=0$ if and only if $f$ attains one of the values $\gamma_1$, $\gamma_2$ or $\gamma_3$. Suppose that $\alpha$ and $\beta$ are chosen so that $\gamma_1$, $\gamma_2$ and $\gamma_3$ are distinct.
Now, similarly as in example~\ref{jacobiexample}, we have
    $$
    \theta_{\Delta^2 f}(\gamma_1)=\theta_{\Delta^2 f}(\gamma_2)=\theta_{\Delta^2 f}(\gamma_3) = 1,
    $$
and so
    $$
    \sum_{a\in\C} \left(\delta(a,f)+ \theta_{\Delta^2 f}(a)\right) = 3
    $$
and $\theta_{\Delta^2 f}(\infty) = -1$. Thus the maximal deficiency sum
    $$
    \sum_{a\in\C\cup\{\infty\}} \left(\delta(a,f)+ \theta_{\Delta^2 f}(a)\right) =2
    $$
is attained.
\end{example}

\bibliographystyle{amsplain}

\def\cprime{$'$}


\end{document}